\def\'#1{\ifx#1i{\accent"13 \i}\else{\accent"13 #1}\fi}
\newtheorem{theorem}{Theorem}[section]
\newtheorem{corollary}[theorem]{Corollary}
\newtheorem{lemma}[theorem]{Lemma}
\title{The 4-girth-thickness of the complete multipartite graph}
\author{Christian Rubio-Montiel}
\date{}
\begin{document}
\maketitle

\begin{abstract}
The $g$-girth-thickness $\theta(g,G)$ of a graph $G$ is the smallest number of planar subgraphs of girth at least $g$ whose union is $G$. In this paper, we calculate the $4$-girth-thickness $\theta(4,G)$ of the complete $m$-partite graph $G$ when each part has an even number of vertices.
\end{abstract}
\textbf{Keywords:} Thickness, planar decomposition, complete multipartite graph, girth.

\textbf{2010 Mathematics Subject Classification:} 05C10.

	
\section{Introduction}
The \emph{thickness} $\theta(G)$ of a graph $G$ is the smallest number of planar subgraphs whose union is $G$. Equivalently, it is the smallest number of parts used in any edge partition of $E(G)$ such that each set of edges in the same part induces a planar subgraph.

This parameter was introduced by Tutte \cite{MR0157372} in the 60s. The problem to calculate the thickness of a graph $G$ is an NP-hard problem \cite{MR684270} and a few of exact results can be found in the literature, for example, if $G$ is a complete graph \cite{MR0460162,MR0164339,MR0186573}, a hypercube \cite{MR0211901}, or a complete multipartite graph for some particular values \cite{MR3243852,MR3610769}. Even for the complete bipartite graph there are only partial results \cite{MR0158388,MR0229545}.

Some generalizations of the thickness for complete graphs have been studied, for instance, the outerthickness $\theta_o$, defined similarly but with outerplanar instead of planar \cite{MR1100049}, the $S$-thickness $\theta_S$, considering the thickness on a surface $S$ instead of the plane \cite{MR0245475}, and the $k$-degree-thickness $\theta_k$ taking a restriction on the planar subgraphs: each planar subgraph has maximum degree at most $k$ \cite{MR0491284}.

The thickness has applications in the design of circuits \cite{MR1079374}, in the Ringel's earth-moon problem \cite{MR1735339}, and to bound the achromatic numbers of planar graphs \cite{araujo2017complete}, etc. See the survey \cite{MR1617664}.

In \cite{rubio20174}, the author introduced the \emph{$g$-girth-thickness} $\theta(g,G)$ of a graph $G$ as the minimum number of planar subgraphs of girth at least $g$ whose union is $G$, a generalization of the thickness owing to the fact that the $g$-girth-thickness is the usual thickness when $g=3$ and also the \emph{arboricity number} when $g=\infty$ because the \emph{girth} of a graph is the size of its shortest cycle or $\infty$ if it is acyclic. See also \cite{casta2017}.

In this paper, we obtain the $4$-girth-thickness $\theta(4,K_{n_1,n_2,\dots,n_m})$ of the complete $m$-partite graph $K_{n_1,n_2,\dots,n_m}$ when $n_i$ is even for all $i\in\{1,2,\dots,m\}$.


\section{Calculating $\theta(4,K_{n_1,n_2,\dots,n_m})$}\label{Section2}

Given a simple graph $G$, we define a new graph $G\bowtie G$ in the following way: If $G$ has vertex set $V=\{w_1,w_2,\dots,w_n\}$, the graph $G\bowtie G$ has as vertex set two copies of $V$, namely, $\{u_1,u_2,\dots,u_n,v_1,v_2,\dots,v_n\}$ and two vertices $x_iy_j$ are adjacent if $w_iw_j$ is an edge of $G$, for the symbols $x,y\in\{u,v\}$. For instance, if $w_1w_2$ is an edge of a graph $G$, the graph $G\bowtie G$ has the edges $u_1u_2$, $v_1v_2$, $u_1v_2$ and $v_1u_2$. See Figure \ref{Fig1}.
\begin{figure}[htbp]
\begin{center}	
\includegraphics{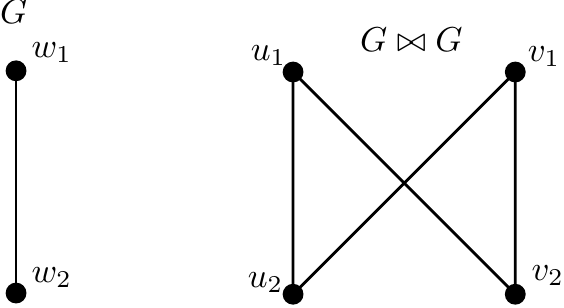}
\caption{\label{Fig1} An edge of $G$ produces four edges in $G\bowtie G$.}
\end{center}
\end{figure}

On the other hand, an acyclic graph of $n$ vertices has at most $n-1$ edges and a planar graph of $n$ vertices and girth $g<\infty$ has at most $\frac{g}{g-2}(n-2)$ edges, see \cite{MR2368647}. Therefore, a planar graph of $n$ vertices and girth at least $4$ has at most $2(n-2)$ edges for $n\geq 4$ and at most $n-1$, otherwise. In consequence, the $4$-girth-thickness $\theta(4,G)$ of a graph $G$ is at least $\left\lceil \frac{|E(G)|}{2(n-2)}\right\rceil$ for $n\geq 4$ and at least $\left\lceil \frac{|E(G)|}{n-1}\right\rceil$, otherwise.
\begin{lemma}\label{lemma1}
If $G$ is a tree of order $n$ then $G\bowtie G$ is a bipartite planar graph of size $2(2n-2)$.
\end{lemma}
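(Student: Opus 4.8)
The plan is to dispose of the size and bipartiteness claims immediately and then attack planarity by induction on $n$, carrying a strengthened hypothesis.

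\emph{Size.} A tree of order $n$ has $n-1$ edges, and by the definition of $\bowtie$ each edge $w_iw_j$ of $G$ contributes the four pairwise distinct edges $u_iu_j,\,v_iv_j,\,u_iv_j,\,v_iu_j$ to $G\bowtie G$, distinct edges of $G$ contributing disjoint quadruples; hence $|E(G\bowtie G)|=4(n-1)=2(2n-2)$.

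\emph{Bipartiteness.} Since $G$ is a tree it has a proper $2$-colouring $c\colon V(G)\to\{0,1\}$. Colour each of $u_i$ and $v_i$ with $c(w_i)$. Any edge of $G\bowtie G$ joins $x_iy_j$ with $w_iw_j\in E(G)$, so it is bichromatic; thus $G\bowtie G$ is bipartite, with parts $\{u_i,v_i:c(w_i)=0\}$ and $\{u_i,v_i:c(w_i)=1\}$. Combined with $|V(G\bowtie G)|=2n$, this already shows that a planar $G\bowtie G$ would attain the bound $|E|\le 2(|V|-2)$ with equality, i.e.\ be a quadrangulation of the sphere; the embedding produced below will be one.

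\emph{Planarity.} I would induct on $n$, the cases $n\le 2$ being trivial ($G\bowtie G$ is two isolated vertices, or the $4$-cycle $u_1u_2v_1v_2$). For $n\ge 3$, remove a leaf $\ell$ of $G$ with neighbour $p$ to obtain the tree $G'=G-\ell$; then $G\bowtie G$ is $G'\bowtie G'$ with two new vertices $u_\ell,v_\ell$ and the four edges of the $K_{2,2}$ between $\{u_\ell,v_\ell\}$ and $\{u_p,v_p\}$, which form the $4$-cycle $u_p\,u_\ell\,v_p\,v_\ell\,u_p$. The elementary step is: given a planar embedding of $G'\bowtie G'$ with a face $F$ having both $u_p$ and $v_p$ on its boundary, place $u_\ell,v_\ell$ inside $F$ and draw the two $u_p$–$v_p$ arcs $u_p u_\ell v_p$ and $u_p v_\ell v_p$ through the interior of $F$; this splits $F$ into three faces, one being the quadrilateral bounded by the new $4$-cycle, and the embedding stays planar (and a quadrangulation). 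To make this feed itself I would carry forward the hypothesis that \emph{$G\bowtie G$ admits a planar quadrangulation of the sphere in which (i) every vertex $w_i$ has both $u_i$ and $v_i$ on a common face, and (ii) for every leaf $w_i$ with neighbour $w_j$ the $4$-cycle $u_iu_jv_iv_j$ bounds a face}, building the tree in a rooted order so that every non-leaf vertex has had a leaf attached to it and thereby owns such a bounded quadrilateral (a ``lens'').

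The main obstacle is verifying that (i) and (ii) survive the surgery, which forces a careful choice of $F$. For $p$ itself (i) is automatic, since each of the three faces carved out of $F$ still carries both $u_p$ and $v_p$; for the new leaf $\ell$, (ii) holds because its required $4$-cycle is exactly the new leaf-quadrilateral; and every leaf-quadrilateral other than $F$ is untouched. The delicate point is that $F$, being a quadrilateral $u_p\,z\,v_p\,z'$ with $N(u_p)=N(v_p)$ in $G\bowtie G$, can threaten one more vertex: the only vertex besides $p$ that can have both copies on $\partial F$ is a neighbour $q$ of $p$ with $\{z,z'\}=\{u_q,v_q\}$, i.e.\ $F$ is one side of the $4$-cycle $u_pu_qv_pv_q$. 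I would protect $q$ by choosing $F$ so as to avoid it when possible; and when it is unavoidable, note that $\deg_{G'}(p)=1$, so $u_p,v_p$ have degree $2$, and if $q$ were also a leaf of $G'$ then $u_q,v_q$ would have degree $2$ as well and $u_pu_qv_pv_q$ would be a whole component, forcing $G'=K_2$ (a case handled directly since then the opposite face of the $4$-cycle survives) — hence $q$ is a non-leaf of $G'$ and retains a lens quadrilateral lying outside $F$. Assembling this case analysis, and confirming the result is still a quadrangulation, is the bulk of the proof; the rest is routine bookkeeping.
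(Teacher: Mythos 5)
Your counting and two-colouring arguments for the size and bipartiteness claims are correct, and your plan for planarity --- induct on $n$ by deleting a leaf $\ell$ with neighbour $p$ and re-inserting $u_\ell,v_\ell$ into a face whose boundary contains both $u_p$ and $v_p$ --- is the same strategy as the paper's, which realizes the insertion by drawing the two new paths ``parallel'' to an existing path $u_p v_q v_p$ through a copy of a neighbour $q$ of $p$. You have also correctly isolated the one genuinely delicate point, namely that such a face must exist and that its existence must be propagated through the induction; the paper passes over this quickly, whereas you try to carry it as an explicit invariant. The problem is that the invariant bookkeeping you sketch does not close, and you concede as much when you defer ``the bulk of the proof.''

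Two specific steps fail as written. First, the assertion that whenever every face containing both $u_p$ and $v_p$ is a lens $u_pu_qv_pv_q$ one must have $\deg_{G'}(p)=1$ is unjustified: if $p$ has two neighbours $q,r$ in $G'$ and the rotation at $u_p$ is $u_q,v_q,u_r,v_r$, the faces through both $u_p$ and $v_p$ can be exactly the two lenses $u_pu_qv_pv_q$ and $u_pu_rv_pv_r$ (the remaining faces at $u_p$ being quadrilaterals $u_pv_qtu_r$ and $u_pv_rsu_q$ with $t,s\neq v_p$), so the ``unavoidable'' case is not confined to leaves and your escape route via the lens of a leaf-child is not available in general. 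Second, invariant (i) is not preserved in the way you assert: when you are forced to insert into the lens of a current leaf $q$ of $G'$ (and you are forced whenever $p$ itself is a leaf of $G'$, since then the lens is the only face carrying both $u_p$ and $v_p$), the vertex $q$ whose copies sit at the far corners ends up with $u_q$ and $v_q$ on the two distinct outer faces of the split, and the ``lens it owns'' may itself be the lens of one of its leaf-children, which a later insertion is again forced to destroy. A repair is possible --- for instance, build the tree in BFS order, reserve for each vertex the lens created when it was attached to its parent, attach the first child of $p$ inside that reserved face (damaging only $p$'s already-finished parent) and each subsequent child inside an outer face $u_pzv_pu_{c}$ of the previous split, whose far corners are copies of two different vertices so that no third vertex is ever damaged --- but some such argument still has to be supplied; as it stands the planarity half of the proof is incomplete.
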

\begin{proof}
By induction over $n$. The basis is given in Figure \ref{Fig1} for $n=2$. Now, take a tree $G$ with $n+1$ vertices. Since it has at least a leaf, we say, the vertex $w_1$ incident to $w_2$ then we delete $w_1$ from $G$ and by induction hypothesis, $H\bowtie H$ is a bipartite planar of size $2(2n-2)$ edges for $H=G\setminus \{w_1\}$. Since $H$ is connected, the vertex labeled $w_2$ has at least a neighbour, we say, the vertex labeled $w_3$, then $u_2v_3v_2$ is a path in $H\bowtie H$ and the edge $u_2v_2\notin E(H\bowtie H)$. Add the paths $u_2v_1v_2$ and $u_2u_1v_2$ to $H\bowtie H$ such that both of them are ``parallel'' to $u_2v_3v_2$ and identify the vertices $u_2$ as a single vertex as well as the vertices $v_2$. This proves that $G\bowtie G$ is planar. To verify that is bipartite, given a proper coloring of $H\bowtie H$ with two colors, we extend the coloring putting the same color of $v_3$ to $v_1$ and $u_1$. Then the resulting coloring is proper. Due to the fact that we add four edges, $H\bowtie H$ has $2(2n-2)+4=2(2(n+1)-2)$ edges and the lemma follows.
\end{proof}
Now, we recall that the arboricity number or $\infty$-girth-thickness $\theta(\infty,G)$ of a graph $G$ equals (see \cite{MR0161333})\[\max\left\{ \left\lceil \frac{|E(H)|}{|V(H)|-1}\right\rceil :H\textrm{ is an induced subgraph of }G\right\}.\]
We have the following theorem.
\begin{theorem} \label{teo1}
If $G$ is a simple graph of $n\geq 2$ vertices and $e$ edges, then \[\left\lceil \frac{e}{n-1}\right\rceil \leq \theta(4,G\bowtie G)\leq \theta(\infty,G).\]
\end{theorem}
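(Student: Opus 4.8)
The lower bound is essentially free. We need $\theta(4, G \bowtie G) \geq \lceil e/(n-1) \rceil$. The graph $G \bowtie G$ has $2n$ vertices and $4e$ edges. Using the edge-density bound stated in the excerpt (a planar graph with girth $\geq 4$ on $N$ vertices has at most $2(N-2)$ edges for $N \geq 4$), we get $\theta(4, G \bowtie G) \geq \lceil 4e / (2(2n-2)) \rceil = \lceil e/(n-1) \rceil$. We should check the small cases $n = 2$ separately (then $2n = 4$, fine), so really no issue arises.

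The upper bound is the substantive part. Let $a = \theta(\infty, G)$. By Nash-Williams' formula (quoted above), $G$ decomposes into $a$ forests $F_1, \dots, F_a$ whose edge sets partition $E(G)$. The plan is to lift this decomposition to $G \bowtie G$. Each forest $F_i$ is a disjoint union of trees; by Lemma \ref{lemma1}, for a tree $T$ the graph $T \bowtie T$ is planar and bipartite (hence girth $\geq 4$, since bipartite simple graphs have girth $\geq 4$ — we should note $T \bowtie T$ is simple as long as $T$ has no loops, which holds). I would argue that for a forest $F$, the graph $F \bowtie F$ is the disjoint union of the graphs $T \bowtie T$ over the components $T$ of $F$ — this requires checking that the $\bowtie$ construction respects the component structure, i.e. if $w_i, w_j$ lie in different components of $F$ then none of the four edges among $u_i, u_j, v_i, v_j$ appears. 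That follows directly from the definition, since an edge of $F \bowtie F$ exists only over an edge of $F$. A disjoint union of planar bipartite graphs is planar bipartite, so $F_i \bowtie F_i$ is planar with girth $\geq 4$. Finally, the edge sets of $F_1 \bowtie F_1, \dots, F_a \bowtie F_a$ partition $E(G \bowtie G)$: every edge of $G \bowtie G$ sits over a unique edge of $G$, which lies in a unique $F_i$. Hence $\theta(4, G \bowtie G) \leq a = \theta(\infty, G)$.

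The only mild subtlety I anticipate is handling isolated vertices and the tree-versus-forest gap in Lemma \ref{lemma1}: Lemma \ref{lemma1} is stated for trees, and a component of a forest could be a single vertex (for which $\bowtie$ produces two isolated vertices, contributing nothing and causing no trouble) or the forest itself might have fewer than the ``expected'' number of edges. None of this obstructs the argument — isolated vertices only add isolated vertices on both sides, and a planar graph plus isolated vertices is still planar. So I expect the proof to be short: quote the density bound for the lower inequality, and for the upper inequality apply Nash-Williams, decompose into forests, apply Lemma \ref{lemma1} componentwise, and observe that disjoint unions preserve planarity and bipartiteness while the lifted forests partition $E(G \bowtie G)$.
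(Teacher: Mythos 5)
Your proposal is correct and follows essentially the same route as the paper: the edge-count bound for planar graphs of girth at least $4$ gives the lower bound, and lifting a Nash-Williams forest decomposition of $G$ to $G\bowtie G$ via Lemma \ref{lemma1} gives the upper bound. Your explicit handling of the tree-versus-forest gap (components and isolated vertices) is a point the paper passes over silently, but it is exactly the right observation and does not change the argument.
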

\begin{proof}
Since $G\bowtie G$ has $2n\geq 4$ vertices, $4e$ edges and
\[\frac{|E(G\bowtie G)|}{2(|V(G\bowtie G)|-2)}=\frac{4e}{2(2n-2)}=\frac{e}{n-1}\]
it follows the lower bound \[\left\lceil \frac{e}{n-1}\right\rceil \leq \theta(4,G\bowtie G).\]
To verify the upper bound, take an acyclic edge partition $\{F_1,F_2,\dots,F_{\theta(\infty,G)}\}$ of $E(G)$. Therefore, $\{F_1	\bowtie F_1,F_2\bowtie F_2,\dots,F_{\theta(\infty,G)}\bowtie F_{\theta(\infty,G)}\}$ is an edge partition of $E(G\bowtie G)$ (where $F_i\bowtie F_i:=E(\left\langle F_{i}\right\rangle \bowtie\left\langle F_{i}\right\rangle )$ and $\left\langle F_{i}\right\rangle $ is the induced subgraph of the edge set $F_i$ for all $i\in\{1,2,\dots,\theta(\infty,G)\}$). Indeed, an edge $x_jy_{j'}\in E(G\bowtie G)$ is in $F_i\bowtie F_i$ if and only if $w_jw_j'\in E(G)$ is in $F_i$. By Lemma \ref{lemma1}, the result follows.
\end{proof}
\begin{corollary} \label{cor1}
If $G$ is a simple graph of $n\geq 2$ vertices and $e$ edges with $\theta(\infty,G)=\left\lceil \frac{e}{n-1}\right\rceil$, then \[\theta(4,G\bowtie G)=\left\lceil \frac{e}{n-1}\right\rceil.\]
\end{corollary}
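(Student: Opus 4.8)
The plan is to obtain this statement as an immediate consequence of Theorem \ref{teo1}, with no additional machinery. That theorem already supplies, for every simple graph $G$ on $n\ge 2$ vertices with $e$ edges, the two-sided estimate $\left\lceil e/(n-1)\right\rceil \le \theta(4,G\bowtie G)\le \theta(\infty,G)$. Under the hypothesis of the corollary the two outer terms of this chain are literally the same number, namely $\left\lceil e/(n-1)\right\rceil$, so the middle term is pinched to that common value. Thus the argument is just: invoke Theorem \ref{teo1}, substitute $\theta(\infty,G)=\left\lceil e/(n-1)\right\rceil$ into the upper bound, and conclude $\theta(4,G\bowtie G)=\left\lceil e/(n-1)\right\rceil$.

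I would precede this one-line deduction with a short remark explaining why the hypothesis is the natural one. By the Nash--Williams formula recalled just before Theorem \ref{teo1}, $\theta(\infty,G)$ is the maximum of $\left\lceil |E(H)|/(|V(H)|-1)\right\rceil$ over induced subgraphs $H$ of $G$; taking $H=G$ already gives $\left\lceil e/(n-1)\right\rceil$, so this is always a lower bound for the arboricity. The hypothesis $\theta(\infty,G)=\left\lceil e/(n-1)\right\rceil$ says exactly that this trivial lower bound is attained, i.e.\ that $G$ has no induced subgraph that is proportionally denser. This holds, in particular, for complete graphs and complete multipartite graphs, which is why the corollary is the right tool for the paper's main computation (where $K_{n_1,\dots,n_m}$ with all $n_i$ even is realised as $G\bowtie G$ for $G=K_{n_1/2,\dots,n_m/2}$).

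I do not expect any genuine obstacle here, since the corollary is a formal consequence of a theorem proved in the excerpt; the only points requiring a moment's care are bookkeeping ones: that the hypothesis $n\ge 2$ of Theorem \ref{teo1} is present, that $G\bowtie G$ indeed has $2n\ge 4$ vertices so the girth-$4$ planar edge bound $2(|V|-2)$ (rather than the small-order bound) applies, and that the ceiling appears verbatim on both sides so no rounding discrepancy can creep in. All of these were already handled inside the proof of Theorem \ref{teo1}, so the corollary's proof is genuinely a two-line squeeze.
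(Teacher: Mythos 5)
Your proof is correct and is exactly the argument the paper intends: the corollary is an immediate squeeze between the two bounds of Theorem \ref{teo1}, which is why the paper states it without further proof. Nothing more is needed.
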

Next, we estimate the arboricity number of the complete $m$-partite graph.
\begin{lemma} \label{lemma}
If $K_{n_1,n_2,\dots,n_m}$ is the complete $m$-partite graph then $\theta(\infty,G)=\left\lceil \frac{e}{n-1}\right\rceil$ where $n=n_1+n_2+\ldots+n_m$ and $e=n_1n_2+n_1n_3+\ldots+n_{m-1}n_m$.
\end{lemma}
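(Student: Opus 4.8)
The plan is to apply the Nash-Williams formula (quoted in the excerpt) for the arboricity number, which says that $\theta(\infty,G)$ equals the maximum over induced subgraphs $H$ of $\lceil |E(H)|/(|V(H)|-1)\rceil$. For $G = K_{n_1,\dots,n_m}$ itself this already gives the lower bound $\theta(\infty,G)\geq \lceil e/(n-1)\rceil$, so the entire content of the lemma is the reverse inequality: we must show that no induced subgraph beats the whole graph. Since every induced subgraph of a complete multipartite graph is again a complete multipartite graph $K_{a_1,\dots,a_m}$ with $0\le a_i\le n_i$, the task reduces to a purely arithmetic claim: if $a_i\le n_i$ for all $i$, then
\[
\frac{\sum_{i<j}a_ia_j}{\left(\sum_i a_i\right)-1}\ \le\ \left\lceil \frac{\sum_{i<j}n_in_j}{\left(\sum_i n_i\right)-1}\right\rceil .
\]

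The key steps I would carry out, in order, are as follows. First, reduce to showing that the real-valued ratio $f(a_1,\dots,a_m) = \big(\sum_{i<j}a_ia_j\big)\big/\big(\sum_i a_i - 1\big)$ is, in an appropriate sense, monotone: increasing any single coordinate $a_k$ (while keeping it $\le n_k$) does not decrease $f$ by more than what the ceiling absorbs. Concretely, writing $s=\sum_i a_i$ and using the identity $\sum_{i<j}a_ia_j=\tfrac12\big(s^2-\sum_i a_i^2\big)$, one checks that as a function of $a_k$ alone (all other coordinates fixed) the numerator is linear in $a_k$ with slope $s-a_k = \sum_{i\neq k}a_i$, while the denominator increases by $1$; a short computation shows $\partial f/\partial a_k \ge 0$ whenever $\sum_{i\neq k}a_i\ge f$, i.e. essentially whenever the current average degree is not already enormous, which holds in the relevant range. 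Second, conclude by induction on $\sum_i(n_i-a_i)$ that $f(a_1,\dots,a_m)\le f(n_1,\dots,n_m) = e/(n-1)$ for all admissible $(a_i)$; taking ceilings gives $\lceil |E(H)|/(|V(H)|-1)\rceil\le\lceil e/(n-1)\rceil$ for every induced subgraph $H$, and the Nash-Williams formula then yields $\theta(\infty,G)\le\lceil e/(n-1)\rceil$.

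The main obstacle is the monotonicity computation: $f$ need not be monotone in every coordinate at every point (when some $a_i$ are very small the ratio can actually be larger than at the full graph, e.g. $K_{1,1}$ has ratio $1$), so the inequality $f(a_1,\dots,a_m)\le f(n_1,\dots,n_m)$ can genuinely fail before the ceiling is applied — it is only the \emph{ceiling} of the left side that is controlled by the ceiling of the right. I therefore expect the cleanest route is not pointwise monotonicity but a direct estimate: show that for any induced complete multipartite subgraph the ratio either is at most $e/(n-1)$, or is small enough (at most some explicit bound like $2$, attained on small subgraphs) that its ceiling is still at most $\lceil e/(n-1)\rceil$. One convenient special case to dispose of first is $m=1$ (no edges, ratio $0$) and $m=2$ with both parts of size one; beyond that, bounding $\sum_{i\neq k}a_i$ from below by a constant and arguing that the average degree of any "large enough" induced subgraph is at least the average degree of $G$ should finish it. A secondary, minor point to be careful about is that deleting vertices from one part only, versus several parts, must both be covered — but since any target $(a_1,\dots,a_m)$ is reached from $(n_1,\dots,n_m)$ by decreasing coordinates one at a time, a single-coordinate step lemma suffices.
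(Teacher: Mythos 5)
Your overall strategy is the paper's own: invoke the Nash--Williams formula, observe that $G$ itself gives the lower bound $\lceil e/(n-1)\rceil$, and get the upper bound by showing the ratio cannot increase when passing to an induced subgraph, one coordinate (equivalently, one deleted vertex) at a time. The paper runs this as an induction on $n$: for a vertex $u$ of degree $d$ it verifies $e+d\le nd$ and hence $\frac{e-d}{n-2}\le\frac{e}{n-1}$. But your writeup stops short of a proof at exactly the decisive step. You correctly compute that $\partial f/\partial a_k\ge 0$ precisely when $\sum_{i\ne k}a_i\ge f$, assert this holds ``essentially'' in ``the relevant range'', then reverse yourself and claim the real-valued inequality $f(a_1,\dots,a_m)\le f(n_1,\dots,n_m)$ ``can genuinely fail'', and finally retreat to an unexecuted fallback (``the ratio is either at most $e/(n-1)$ or at most some explicit bound like $2$'') that is too vague to certify. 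That is a genuine gap: the one inequality the whole lemma rests on is neither proved nor replaced by anything workable.

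The obstacle you fear does not exist, and your first instinct was correct. Write $s'=\sum_{i\ne k}a_i$ and let $N'=\sum_{i<j,\ i,j\ne k}a_ia_j$ be the number of edges among the parts other than the $k$-th. Then the numerator of $f$ is $a_ks'+N'$ and the denominator is $a_k+s'-1$, both linear in $a_k$, so on any interval where the denominator is positive $f$ is monotone in $a_k$ with the sign of its derivative equal to the sign of $s'(s'-1)-N'$; since $N'$ counts edges of a graph on $s'$ vertices, $N'\le\binom{s'}{2}\le s'(s'-1)$, so the derivative is nonnegative throughout. Hence $f(a_1,\dots,a_m)\le f(n_1,\dots,n_m)$ holds for every admissible tuple with $\sum_i a_i\ge 2$, with no ceiling absorption needed anywhere. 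Your purported counterexample $K_{1,1}$ is not one: any complete multipartite graph with at least one edge satisfies $e=\sum_{i<j}n_in_j\ge n_1(n-n_1)\ge n-1$, so its ratio is already at least $1$. Once you insert the one-line verification $N'\le s'(s'-1)$ (equivalently, the paper's inequality $e+d\le nd$), your induction on $\sum_i(n_i-a_i)$ closes and the argument coincides with the published one.
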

\begin{proof}
By induction over $n$. The basis is trivial for $K_{1,1}$. Let $G=K_{n_1,n_2,\dots,n_m}$ with $n>2$ and $H=G\setminus \{u\}$ a proper induced subgraph of $G$ for any vertex $u$. By the induction hypothesis, $\theta(\infty,H)=\max\left\{ \left\lceil \frac{|E(F)|}{|V(F)|-1}\right\rceil :F\leq H\right\}=\left\lceil \frac{|E(H)|}{(n-1)-1}\right\rceil$, where $F\leq H$ indicates that $F$ is an induced subgraph of $H$. Since $u$ is an arbitrary vertex and by the hereditary property of the induced subgraphs, we only need to show that \[\frac{|E(H)|}{n-2}\leq \frac{e}{n-1}\]
because \[\max\left\{ \left\lceil \frac{|E(F)|}{|V(F)|-1}\right\rceil :F\leq G\right\}=\max\left\{ \left\lceil \frac{e}{n-1}\right\rceil,\left\lceil \frac{|E(H)|}{n-2}\right\rceil :H=G\setminus \{u\},u\in V(G)\right\}.\]
We prove it in the following way. Without loss of generality, $u$ is a vertex in a part of size $n_m$.

Since \[\begin{array}{cccc}
n_{1}+ & n_{1}n_{2}+ & \ldots & +n_{1}n_{m}+\\
 & n_{2}+ & \ldots & +n_{2}n_{m}+\\
 &  & \vdots\\
 &  &  & n_{m-1}n_{m}
\end{array}\leq\begin{array}{cccc}
n_{1}^{2}+ & n_{1}n_{2}+ & \ldots & +n_{1}n_{m}+\\
n_{2}n_{1}+ & n_{2}^{2}+ & \ldots & +n_{2}n_{m}+\\
 &  & \vdots\\
n_{m-1}n_{1}+ & n_{m-1}n_{2}+ & \ldots & +n_{m-1}n_{m}
\end{array}
\]
then $e+n_1+n_2+\ldots+n_{m-1}\leq n(n_1+n_2+\ldots+n_{m-1})$ and
\[en-e-n(n_1+n_2+\ldots+n_{m-1})+(n_1+n_2+\ldots+n_{m-1})\leq en-2e\]
\[(n-1)(e-(n_1+n_2+\ldots+n_{m-1}))\leq e(n-2)\]
\[\frac{|E(H)|}{n-2}\leq\frac{e}{n-1}\]
and the result follows.
\end{proof}
Now, we can prove our main theorem.
\begin{theorem} \label{teo2}
If $G=K_{2n_1,2n_2,\dots,2n_m}$ is the complete $m$-partite graph then $\theta(4,G)=\left\lceil \frac{e}{n-1}\right\rceil$ where $n=n_1+n_2+\ldots+n_m$ and $e=n_1n_2+n_1n_3+\ldots+n_{m-1}n_m$.
\end{theorem}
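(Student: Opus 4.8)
The plan is to recognise the graph in the statement as an instance of the $\bowtie$-construction and then to read the result off Lemma~\ref{lemma} and Corollary~\ref{cor1}. The one genuine step is the identification
\[K_{2n_1,2n_2,\dots,2n_m}=H\bowtie H,\qquad H:=K_{n_1,n_2,\dots,n_m}.\]
To check this, recall that $H\bowtie H$ has two copies $u_i,v_i$ of each vertex $w_i$ of $H$, and $x_iy_j$ ($x,y\in\{u,v\}$) is an edge exactly when $w_iw_j\in E(H)$. In a complete multipartite graph $w_iw_j$ is an edge if and only if $w_i$ and $w_j$ lie in different parts; hence the $2n_i$ vertices arising from the $i$-th part of $H$ (including the two copies of a single vertex, since there is no loop $w_iw_i$) form an independent set in $H\bowtie H$, while any two vertices coming from different parts are adjacent. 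Thus $H\bowtie H$ is exactly $K_{2n_1,\dots,2n_m}$ (a quick cross-check: both graphs have $2n$ vertices and $4e$ edges).

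Next I would note that $H$ has $n=n_1+\cdots+n_m$ vertices and $e=n_1n_2+\cdots+n_{m-1}n_m$ edges, so Lemma~\ref{lemma} gives $\theta(\infty,H)=\left\lceil\frac{e}{n-1}\right\rceil$. This is precisely the hypothesis of Corollary~\ref{cor1} applied to $H$ (here $n\geq 2$, since we may assume $m\geq 2$, as otherwise $G$ is edgeless). Therefore Corollary~\ref{cor1} yields $\theta(4,H\bowtie H)=\left\lceil\frac{e}{n-1}\right\rceil$, and combining with the identification above we obtain $\theta(4,G)=\left\lceil\frac{e}{n-1}\right\rceil$, as claimed. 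Equivalently, one may bypass Corollary~\ref{cor1} and argue directly through Theorem~\ref{teo1}: the lower bound $\left\lceil\frac{e}{n-1}\right\rceil\leq\theta(4,G)$ is the girth-$4$ edge-count bound on $2n\geq 4$ vertices, and the upper bound comes from splitting an acyclic edge-decomposition of $H$ into $\theta(\infty,H)$ forests and taking the $\bowtie$ of each piece, which is planar of girth at least $4$ by Lemma~\ref{lemma1}.

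Since all the substantive work—planarity and bipartiteness of $G\bowtie G$ for $G$ a tree (Lemma~\ref{lemma1}), the reduction in Theorem~\ref{teo1} and Corollary~\ref{cor1}, and the arboricity of complete multipartite graphs (Lemma~\ref{lemma})—has already been done, I do not expect a real obstacle here. The only thing to be careful about is the bookkeeping: making sure the $n$ and $e$ in the theorem statement coincide with the vertex and edge counts of $H$ rather than of $G$, and checking that the edge-counting lower bound from Theorem~\ref{teo1} meets the arboricity upper bound, which is exactly what the equality in Lemma~\ref{lemma} provides.
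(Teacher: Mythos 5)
Your proposal is correct and follows essentially the same route as the paper: identify $K_{2n_1,\dots,2n_m}$ with $K_{n_1,\dots,n_m}\bowtie K_{n_1,\dots,n_m}$ and then invoke Lemma~\ref{lemma} together with Corollary~\ref{cor1}. Your verification of the identification (including the observation that the two copies of a vertex are non-adjacent because there is no loop $w_iw_i$) is, if anything, slightly more careful than the paper's.
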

\begin{proof}
We need to show that $G=K_{n_1,n_2,\dots,n_m}\bowtie K_{n_1,n_2,\dots,n_m}$. Let $(W_1,W_2,\dots,W_m)$ be an $m$-partition of $K_{n_1,n_2,\dots,n_m}$. The graph $K_{n_1,n_2,\dots,n_m}\bowtie K_{n_1,n_2,\dots,n_m}$ has the partition $(U_1\cup V_1,U_2\cup V_2,\dots,U_m\cup V_m)$ where $U_i$ and $V_i$ are copies of $W_i$ for $i\in\{1,2,\dots,m\}$. Take two vertices $x_i$ and $y_j$ in different parts, without loss of generality, $U_1\cup V_1$ and $U_2\cup V_2$. If the vertex $x_i$ is in $U_1$ and $y_j$ is in $U_2$ then they are adjacent because $w_iw_j$is an edge of $K_{n_1,n_2,\dots,n_m}$ is $m$-complete. Similarly for $x_i\in V_1$ and $y_j\in V_2$. If $x_i$ is in $U_1$ and $y_j$ is in $V_2$, then also they are adjacent because $w_iw_j$ is an edge of $K_{n_1,n_2,\dots,n_m}$.
By Corollary \ref{cor1} and Lemma \ref{lemma}, the theorem follows.
\end{proof}
Due to the fact that $\theta(4,G)=\theta(3,G)=\theta(G)$ for any triangle-free graph $G$, we obtain an alternative proof for the thickness of the complete bipartite graph $K_{2n_1,2n_2}$ that is given in \cite{MR0158388}.
\begin{corollary} \label{cor2}
If $G=K_{2n_1,2n_2}$ is the complete bipartite graph then $\theta(G)=\left\lceil \frac{e}{n-1}\right\rceil$ where $n=n_1+n_2$ and $e=n_1n_2$.
\end{corollary}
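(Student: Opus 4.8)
The plan is to obtain this statement as the immediate specialization of Theorem \ref{teo2} to $m=2$, once we observe that the girth-$4$ constraint is vacuous for decompositions of a triangle-free graph.

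First I would record the elementary fact that $K_{2n_1,2n_2}$ is bipartite, hence triangle-free, and that every subgraph of a triangle-free graph is again triangle-free and therefore has girth at least $4$ (or is acyclic, in which case its girth is $\infty\geq 4$). Consequently, any edge partition of $K_{2n_1,2n_2}$ into planar subgraphs is automatically a partition into planar subgraphs of girth at least $4$, and conversely every partition of the latter type is in particular one of the former type. This gives the equality $\theta(K_{2n_1,2n_2})=\theta(4,K_{2n_1,2n_2})$ claimed in the paragraph preceding the corollary, and more generally $\theta(G)=\theta(3,G)=\theta(4,G)$ for any triangle-free $G$.

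Next I would invoke Theorem \ref{teo2} with $m=2$: setting $G=K_{2n_1,2n_2}$, $n=n_1+n_2$, and $e=n_1n_2$, it yields $\theta(4,K_{2n_1,2n_2})=\left\lceil \frac{e}{n-1}\right\rceil$. Combining this with the identity of the previous paragraph gives $\theta(K_{2n_1,2n_2})=\left\lceil \frac{e}{n-1}\right\rceil$, which is exactly the formula of \cite{MR0158388}, so the corollary is proved.

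There is essentially no obstacle: the only point that might warrant a remark is the degenerate case in which $K_{2n_1,2n_2}$ is edgeless or has a single vertex on a part (so that $n<2$ and Theorem \ref{teo2} does not literally apply); but then both sides of the asserted equality are $0$ (or $1$, depending on convention), so the identity is trivial, and one may simply assume $n_1,n_2\geq 1$ for the substantive case. Thus all the real content is carried by Theorem \ref{teo2}, and the corollary merely packages the bipartite case together with the observation that it reproves the classical thickness result for complete bipartite graphs with even part sizes.
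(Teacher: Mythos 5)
Your proposal matches the paper's argument exactly: the corollary is obtained by noting that $K_{2n_1,2n_2}$ is triangle-free, so $\theta(G)=\theta(4,G)$, and then specializing Theorem \ref{teo2} to $m=2$. The additional remark about degenerate cases is harmless but not needed.
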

\section*{Acknowledgments}
The authors wish to thank the anonymous referees of this paper for their suggestions and remarks.

C. Rubio-Montiel was partially supported by PAIDI/007/19.


Divisi{\' o}n de Matem{\' a}ticas e Ingenier{\' i}a\\
FES Acatl{\' a}n\\
Universidad Nacional Aut{\' o}noma de M{\' e}xico\\
Naucalpan 53150\\
State of Mexico\\
Mexico\\
{\tt christian.rubio@apolo.acatlan.unam.mx}

\end{document}